\newtheorem{theorem}{Theorem}
\newtheorem{lemma}{Lemma}
\newtheorem{proposition}{Proposition}
\newtheorem{assumption}{Assumption}
\newcommand{\bmat}[1]{\begin{bmatrix}#1\end{bmatrix}}
\newcommand{\norm}[1]{\lVert{#1}\rVert}
\newcommand{\R}{\mathbb{R}}	
\DeclareMathOperator*{\argmax}{arg\max}
\newcommand{\tp}{\mathsf{T}}
\title{\LARGE \bf
 Convex Programs and Lyapunov Functions for Reinforcement Learning: A Unified Perspective on the Analysis of Value-Based Methods
}
\author{Xingang Guo and Bin Hu
\thanks{Xingang Guo and Bin Hu are with the Coordinated Science Laboratory (CSL) and the Department of Electrical and Computer Engineering, 
        University of Illinois at Urbana-Champaign. Email:
        {\tt\small \{xingang2,~binhu7\}@illinois.edu}}%
}
\begin{document}

\maketitle
\thispagestyle{empty}
\pagestyle{empty}

\begin{abstract}
Value-based methods play a fundamental role in Markov decision processes (MDPs) and reinforcement learning (RL). In this paper, we present a unified control-theoretic framework for analyzing valued-based methods such as value computation (VC), value iteration (VI), and temporal difference (TD) learning (with linear function approximation). Built upon an intrinsic connection between value-based methods and  dynamic systems, we can directly use existing convex  testing conditions in control theory  to derive various convergence results for the aforementioned value-based methods. These testing conditions are convex programs in form of either linear programming (LP) or semidefinite programming (SDP), and can be solved to construct Lyapunov functions in a straightforward manner.  Our analysis reveals some intriguing connections  between feedback control systems and RL algorithms.  It is our hope that such connections can inspire more work at the intersection of system/control theory and RL.
\end{abstract}

%

\section{Introduction}

Over the past 10 years, many research ideas have emerged from the fields of control, optimization, and machine learning.
 A big research focus is on fundamental connections between control systems and iterative algorithms.
The research on this topic has led to  exciting new results on algorithm analysis and design.
 For example, iterative optimization methods have been analyzed as feedback control systems~\cite{Lessard2014, nishihara2015,hu17a,fazlyab2017analysis,sundararajan2017robust,hu2017control,hu17b,hatanaka2018passivity,hu2018dissipativity,han2019systematic, aybat2018robust,xiong2020analytical,mohammadi2020robustness,hu2021analysis,gannot2021frequency}, and control-theoretic tools have been leveraged  to design new optimization algorithms in various settings~\cite{van2017fastest,cyrus2018robust,fazlyab2018design, nelson2018integral,aybat2019universally,michalowsky2020robust,sundararajan2020analysis}. Recently, there has been an attempt to extend such control perspectives to reinforcement learning (RL). In~\cite{hu2019characterizing}, a fundamental connection between temporal difference learning and Markovian jump linear systems (MJLS) has been established. In \cite{lee2019unified}, the switching system theory has been combined with the ODE method~\cite{borkar2009stochastic, borkar2000ode} to analyze the asymptotic convergence of $Q$-learning. 
More recently, value iteration has also been connected to PID control \cite{Farahmand2021}.
Our paper is inspired by these prior results, and establishes  a new connection between RL and control theory. Specifically, we tailor various convex testing conditions in control theory for unifying the analysis of
 value-based algorithms.

RL refers to a collection of techniques for solving Markov decision processes (MDPs), and has shown great promise in many sequential decision making tasks \cite{Puterman2014, sutton2018reinforcement,bertsekas1996neuro}. Value-based methods including value computation (VC), value iteration (VI), and temporal difference (TD) learning \cite{sutton2018reinforcement} have played a fundamental role in modern RL.  The convergence proofs for these methods are typically derived in a case-by-case manner \cite{bertsekas1996neuro,sutton2018reinforcement,Puterman2014,bhandari2018finite,srikant2019finite,xu2019two,sun2020finite,xu2020finite,zhang2021finite,doan2021finiteb}.
Such case-by-case analysis may not be easily generalized. For example,  the convergence proof for VI is based on applying the contraction mapping theorem, and requires identifying the right distance metric via deep expert insights. The same distance metric may not be directly used in analyzing other algorithms such as TD learning.

In this paper, we present a unified control-theoretic framework for the convergence analysis of value-based methods. 
A key observation is that value-based methods can be viewed as dynamical control systems whose behaviors can be directly analyzed using convex programs.  In this paper, VC is modeled as a linear time invariant (LTI) positive system, and VI is viewed as a switched positive affine system. In addition, we also borrow the Markovian jump linear system (MJLS) perspective on TD learning from \cite{hu2019characterizing}. Notice that there exist many convex testing conditions for analyzing  LTI positive systems  \cite{Farina2011Pos_sys,Rantzer2011}, switched positive systems \cite{Blanchini2015, Liu2009, Mason2007, Xu2018, Fornasini2011,pastravanu2014max}, and MSLS  \cite{costa2006, costa1993stability, el1996robust, Fang2002, ji1991stability, Seiler2003bounded}.  
We show that  valued-based methods can be analyzed by directly applying the existing linear programming (LP) or semidefinite programming (SDP) conditions from the positive system or MJLS theory.  Importantly,  we can solve these convex conditions analytically to build our Lyapunov-based proofs in a more transparent manner.  It is our hope that the proposed framework can inspire more work at the intersection of system/control theory and RL.

Our analysis makes direct use of existing convex programs in control theory, and complements
the work in \cite{lee2019unified, hu2019characterizing, srikant2019finite} which rely on other types of stability analysis tools.
 There are many other convex conditions in control theory, and our work opens the possibility of re-examining these conditions in the context of RL.
Compared with \cite{hu2019characterizing}, our SDP approach has led to new stepsize bounds for TD learning. This result will be given in Section~\ref{sec: TD}.

\section{Preliminaries  and Problem Formulation}
\label{sec:BACKGROUND}

\subsection{Notation}
The set of $n$-dimensional real vectors is denoted as
$\mathbb{R}^n$. The set of $m\times n$ real matrices is denoted as $\mathbb{R}^{m \times n}$.
We use $\R_+^n$ to denote 
the set of the $n$-dimensional real vectors whose entries are all non-negative. 
 For $x \in \mathbb{R}^n$, we denote its $i$-th element as $x(i)$.  The inequality $x > 0~ (x \geq 0)$ means that $x(i) > 0~(x(i) \geq 0)$ for all $i$.  
 For $A \in \R^{n \times n}$, the inequality $A > 0~ (A \geq 0)$ means all the entries of $A$ are positive (non-negative).
 We use $A^\tp$ and $\rho(A)$ to denote the transpose and the spectral radius of $A$, respectively.   A matrix $A$ is said to be Schur stable if  $\rho(A) < 1$.
The inequality $G \succ 0~ (G \succeq 0)$ means that the matrix $X$ is positive (semi-)definite.

%

\subsection{Markov Decision Process and Reinforcement Learning}
First, we present some background materials on MDPs and RL.
Many decision making tasks can be formulated as MDPs. Consider a MDP defined by the  tuple $(\mathcal{S},\mathcal{A},P,R,\gamma)$, where $\mathcal{S}$ is the set of states,  $\mathcal{A}$ is the set of actions, $P$ is the transition kernel, $R$ is the reward function, and $\gamma\in (0,1)$ is the discount factor. In this paper, both $\mathcal{S}$ and $\mathcal{A}$ are assumed to be finite. Without loss of generality, we assume $\mathcal{S}=\{1,2,\ldots,n\}$ and $\mathcal{A}=\{1,2,\ldots, l\}$.  The transition kernel is specified by
 $P((s,a),s') = \mathbf{P}(s_{k+1}=s'|s_k = s, a_k = a)$.

A policy is a feedback law mapping from states to actions. A policy can be stochastic  and maps each state to a probability distribution over $\mathcal{A}$. The goal is to find an optimal policy that maximizes the total accumulated rewards:
$$\pi^* = \argmax_\pi \mathbb{E}\left[\sum_{k=0}^{\infty} \gamma^k R(s_k, a_k)\big\vert a_k\sim\pi(\cdot |s_k), s_0\right].$$

To obtain an optimal policy, one can solve the optimal value function $J^*$ from the optimal Bellman equation: 
 \begin{align}
 J^*(s)=\max_{a\in \mathcal{A} }\left(R(s,a)+\gamma \sum_{s'\in \mathcal{S}} P((s,a),s') J^*(s')\right).
 \end{align}
Once $J^*$ is found,  one can construct the optimal policy as
\begin{align*}
\pi^*(s)=\argmax_{a\in \mathcal{A} }\left(R(s,a)+\gamma \sum_{s'\in \mathcal{S}} P((s,a),s') J^*(s')\right).
\end{align*}
The optimal Bellman equation depends on the transition kernel. If the transition model is  unknown, 
 RL methods (e.g. TD learning, $Q$-learning, policy gradient, etc) can be applied.

\subsection{Value Computation}
The performance of a given policy $\pi$ can be evaluated from the associated value function $J_\pi$, which is defined as
 $$J_\pi(i) =  \mathbb{E}\left[\sum_{k=0}^{\infty} \gamma^k R(s_k, a_k)\big\vert a_k\sim\pi(\cdot |s_k), s_0=i\right].$$
For given $\pi$, denote the probability transition matrix of $\{s_k\}$  as $P_\pi$. 
 Then $J_\pi$ can be solved from the Bellman equation:
\begin{align}
J_{\pi}(i) = R_{\pi}(i) + \gamma \sum_{j} P_\pi(i,j) J_{\pi}(j),
\end{align}
where $P_\pi(i,j)$ is the $(i,j)$-th entry of $P_\pi$, and $R_\pi(i)$ is the immediate reward obtained from state $i$ under the policy $\pi$. 
 The above Bellman equation can be compactly rewritten as
 \begin{align} \label{eq: PEopt}
 J_\pi =R_\pi+ \gamma P_\pi J_\pi .
 \end{align}
 Obviously, $J_\pi$ can be calculated as $J_\pi=(I-\gamma P_\pi)^{-1} R_\pi$ for any $0<\gamma<1$.
 To avoid matrix inversion, a popular approach for solving $J_\pi$ is to apply the following iterative value computation (VC) scheme:
  \begin{align}
  \label{eq:VC1}
 J_{k+1} =\gamma P_\pi J_{k} +R_\pi .
 \end{align}
%
It is known that the above method is guaranteed to converge to $J_\pi$ at a linear rate $\gamma$. This is actually obvious from the linear system theory. For the right stochastic matrix $P_\pi$, we have $\rho(P_\pi) = 1$. Hence the convergence of \eqref{eq:VC1} can be guaranteed by  the fact that we have $\rho(\gamma P_\pi) = \gamma \in (0, 1)$.

\subsection{Value Iteration}
One can solve the optimal value function $J^*$ by recursively applying the Bellman operator $T(\cdot): \mathbb{R}^n \to \mathbb{R}^n$. This leads to the famous value iteration (VI) method which iterates as 
$${J}_{k+1}(s) = \max_{a\in \mathcal{A}}\left(R(s,a) + \gamma \sum_{s'} P((s,a),s')J_k(s')\right).$$ A pseudo-code for VI is provided as follows.

\begin{algorithm} \label{VI_algorithm}
\SetAlgoLined
Initialization: $ J(s) \leftarrow J_0(s),  \forall ~ s \in \mathcal{S}$\;
 Repeat \\ 
  For all $s \in \mathcal{S}$ \;
  ${J}(s) \! \leftarrow \! \max_{a \in \mathcal{A}}\left(R(s,a) + \gamma \sum_{s'} P((s,a),s')J(s')\right)$\;
  Until $J$ converge 
 \caption{Value iteration algorithm}
\end{algorithm}
The iteration of VI can be compactly rewritten as
\begin{equation}
{J}_{k+1} = T({J}_{k}),  \label{recursion_VI}
\end{equation}
where $T(\cdot)$ is the Bellman optimality operator.  
It is known that VI converges to $J^*$ at the rate $\gamma$, and a standard way to prove this is to
apply the contraction mapping theorem \cite{Puterman2014}.

\begin{table*}[h]
\caption{Convex Programs for Value-based Methods}
\label{Main_table}
\begin{center}
  \begin{tabular}{c|c|c|c}
   \hline
    Value-based Algorithms & Type of Dynamic Systems & Convex Programs  &Lyapunov functions  \\ 
    \hline 
    Value Computation & LTI Positive system Eq.\eqref{eq:VC2} & LP \& SDP (Theorem \ref{Thm: PE})  & Eq. \eqref{LF_PE}\\
    Value Iteration  & Switched positive system Eq.\eqref{eq:VI_switch} & LP (Condition \eqref{mc2})  & Eq. \eqref{eq:VVI} \\
    TD(0) with Linear Function Approximation  &MJLS Eq.\eqref{MJLS_gen} &SDP (Proposition \ref{MSS}) & Eq. \eqref{eq:MJLSLya}\\
    \hline
  \end{tabular}
\end{center}
\end{table*}

\subsection{TD Learning with Linear Function Approximation}

It is very common that the transition kernel of MDP is unknown. In this case, the VC scheme \eqref{eq:VC1} is not applicable. Instead, TD learning can be used to estimate the value function from sampled trajectories of the underlying Markov chain $\{ s_k \}$. Most applications have enormous state spaces, making policy evaluation difficult. Then one needs to incorporate function approximation techniques. Suppose the value function is estimated as $J_\pi(s)\approx \phi(s)^\tp \theta_\pi$ where $\phi$ is the feature vector and $\theta_\pi$ is the weight to be estimated.   One model-free way to estimate $\theta_\pi$ is to apply the following TD(0)~recursion: 
\begin{equation*} 
    \theta_{k+1} = \theta_{k} - \alpha \phi(s_k)\left( (\phi(s_k) - \gamma \phi(s_{k+1}))^\tp \theta_k - R_{\pi}(s_k) \right), 
    \end{equation*}  
where $R_\pi$ is the reward, $\gamma$ is the discount factor, and $\alpha$ is the learning rate. 
The Markov nature of $\{s_k\}$ has caused trouble for  the finite time analysis of the above method. 
Very recently, various specialized tricks \cite{bhandari2018finite,srikant2019finite,hu2019characterizing} have been developed to address this technical difficulty, leading to several useful finite time results for TD(0) with sufficiently small $\alpha$.

%



\subsection{Main Objective: Unified Analysis of VC, VI, and TD}
The objective of this work is to develop a simple routine unifying the analysis of VC, VI, and TD(0) with linear function approximation. 
Built upon the connections between value-based methods and dynamic systems,  we can directly use  existing convex programs (LP/SDP) in control theory to analyze the above value-based methods. In addition, these convex programs lead to different types of Lyapunov functions, making the Lyapunov-based convergence analysis transparent. Table \ref{Main_table} summarizes our main results in this work. Our analysis sheds new light on how to combine convex programs and Lyapunov analysis in the context of RL. 


\section{Unified Analysis of Value-Based Methods}
\label{sec:mu_VC}

\subsection{LPs and SDPs for Analyzing VC}
To analyze VC, we apply  \eqref{eq: PEopt} and \eqref{eq:VC1} to rewrite VC as
\begin{align}\label{eq:VC2}
\zeta_{k+1} = A_\pi \zeta_k 
\end{align}
where  $\zeta_k=J_k-J_\pi$, and $A_\pi = \gamma P_\pi \in \mathbb{R}^{n \times n}$.  Notice that \eqref{eq:VC2} is actually a positive system since  $A_\pi = \gamma P_\pi \geq 0$. To verify the Schur stability of $A_\pi$, the following convex conditions for positive linear systems can be directly applied.
\begin{proposition} \label{Propo_PE}
Suppose $A_\pi \geq 0$. Then each of the following conditions provides a necessary and sufficient condition for the stability of \eqref{eq:VC2}:
\begin{enumerate}
\item \label{c2}  $\exists~\xi \in \mathbb{R}^n$ s.t.  $\xi > 0, ~\text{and}~ A_\pi \xi -  \xi<0$,
\item \label{c3} $\exists~\nu \in \mathbb{R}^n$ s.t. $\nu > 0, ~\text{and}~  \nu^\tp A_\pi -  \nu^\tp<0$,
\item \label{c4} $\exists~ G \in \mathbb{R}^{n\times n}$ s.t. $G \succ 0$  $ A_\pi^\tp G A_\pi \prec  G$.
\end{enumerate}
\end{proposition}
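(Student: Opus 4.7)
My plan is to prove the proposition by showing that each of the three conditions is equivalent to Schur stability of $A_\pi$, i.e., $\rho(A_\pi) < 1$.

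Condition (\ref{c4}) is just the classical discrete-time Lyapunov inequality, which is equivalent to Schur stability for an arbitrary square matrix $A$ (nonnegativity plays no role here). For the forward direction, if $\rho(A_\pi) < 1$, then $G = \sum_{k=0}^{\infty} (A_\pi^\tp)^k A_\pi^k$ converges, is positive definite, and satisfies $A_\pi^\tp G A_\pi - G = -I \prec 0$. For the reverse direction, suppose $G \succ 0$ and $A_\pi^\tp G A_\pi \prec G$; pick any eigenpair $A_\pi v = \lambda v$ (possibly complex) and evaluate $v^* (A_\pi^\tp G A_\pi - G) v = (|\lambda|^2 - 1) v^* G v < 0$, which forces $|\lambda| < 1$.

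For conditions (\ref{c2}) and (\ref{c3}) the essential ingredient is the Perron--Frobenius theorem: since $A_\pi \geq 0$, the spectral radius $\rho(A_\pi)$ is itself an eigenvalue admitting a nonzero nonnegative right eigenvector $u \geq 0$ and a nonzero nonnegative left eigenvector $w \geq 0$. For (\ref{c2}), the forward direction proceeds by assuming $\rho(A_\pi) < 1$, so the Neumann series gives $(I - A_\pi)^{-1} = \sum_{k=0}^{\infty} A_\pi^k \geq 0$; then $\xi := (I - A_\pi)^{-1} \mathbf{1}$ satisfies $\xi \geq \mathbf{1} > 0$ and $(I - A_\pi)\xi = \mathbf{1} > 0$, giving $A_\pi \xi - \xi < 0$. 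Conversely, if some $\xi > 0$ satisfies $A_\pi \xi < \xi$, then pairing with a Perron--Frobenius left eigenvector $w$ yields $\rho(A_\pi)\, w^\tp \xi = w^\tp A_\pi \xi < w^\tp \xi$, and since $w \geq 0$, $w \neq 0$, and $\xi > 0$ we have $w^\tp \xi > 0$, forcing $\rho(A_\pi) < 1$. Condition (\ref{c3}) follows by the same argument applied to $A_\pi^\tp$ (which is also nonnegative), with the roles of left and right eigenvectors interchanged.

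The main subtlety I anticipate is the handling of strict inequalities: the Perron--Frobenius left eigenvector $w$ is only guaranteed to be nonzero and nonnegative (it may have zero entries if $A_\pi$ is reducible), so the strict inequality $w^\tp A_\pi \xi < w^\tp \xi$ can only be leveraged because the test vector $\xi$ is required to be strictly positive. This asymmetry, namely requiring $\xi > 0$ componentwise while only having $w \geq 0$, is exactly what makes the componentwise positivity of the test vector in (\ref{c2}) and (\ref{c3}) unavoidable, and is the one place where the argument requires care.
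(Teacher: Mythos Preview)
Your argument is correct and self-contained. The paper does not actually prove this proposition: it simply states that the result is the discrete-time counterpart of Proposition~1 in \cite{Rantzer2011} and ``can be proved similarly.'' Your route---classical Lyapunov for condition~(\ref{c4}) and Perron--Frobenius combined with the Neumann series $(I-A_\pi)^{-1}=\sum_{k\ge 0}A_\pi^k$ for conditions~(\ref{c2}) and~(\ref{c3})---is exactly the standard argument that underlies Rantzer's continuous-time result, transplanted to the discrete-time setting. So in substance you have supplied precisely the details the paper omits. Your closing remark about the asymmetry between the strictly positive test vector $\xi>0$ and the merely nonnegative Perron left eigenvector $w\ge 0$ is the right place to be careful, and you have handled it correctly: $w\ge 0$, $w\neq 0$ against the strict inequality $A_\pi\xi<\xi$ yields $w^\tp A_\pi\xi < w^\tp\xi$, and $w\ge 0$, $w\neq 0$ against $\xi>0$ yields $w^\tp\xi>0$, so one may divide.
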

\begin{proof}
This result is just the discrete-time counterpart of Proposition 1 in \cite{Rantzer2011} and can be proved similarly. 
\end{proof}
\vspace{0.05in}

The above convex programs can be solved to obtain three types of Lyapunov functions for \eqref{eq:VC2}.  Let $1_n$ denote the $n$-dimensional 
vector whose entries are all equal to $1$.  Then our results can be stated as follows. 

\begin{theorem} \label{Thm: PE}
Consider the recursion \eqref{eq:VC2}. Set $(\xi,\nu,G)$ as 
$$\xi = 1_n , \,\,\,\nu = \omega,\,\,\, G = \operatorname{diag}\left(\frac{\nu(1)}{\xi(1)},\cdots, \frac{\nu(n)}{\xi(n)}\right),$$ where $\omega$ is the stationary distribution of $P_\pi$. Then we have
\begin{align}
&  \xi > 0~ \text{and}~ A_\pi \xi \leq \gamma \xi, \label{mmc2}  \\
&  \nu > 0 ~ \text{and}~ \nu^\tp A_\pi  \leq  \gamma \nu^\tp,  \label{mmc3} \\
&   G\succ 0  ~ \text{and}~ A_\pi ^\tp G A_\pi  \preceq  \gamma^2 G.  \label{mmc4}
\end{align}
This leads to the following three types of Lyapunov~functions
\begin{equation} \label{LF_PE}
V_1(\zeta) = \max_i|\zeta(i)|, \quad V_2(\zeta)=|\nu^\tp \zeta|, \quad V_3(\zeta) = \zeta^\tp G \zeta,
\end{equation}
which satisfy $V_1(\zeta_k) \leq C_1 \gamma^k$, $V_2(\zeta_k)\le C_2 \gamma^k$, and $V_3(\zeta_k)\le C_3 \gamma^{2k}$ for some fixed positive constants $(C_1,C_2,C_3)$.
\end{theorem}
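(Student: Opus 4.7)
The plan is to verify the three inequalities \eqref{mmc2}--\eqref{mmc4} by direct substitution, using only the three structural facts about $P_\pi$: it is nonnegative, every row sums to one, and the stationary distribution $\omega$ satisfies $\omega^\tp P_\pi = \omega^\tp$. Once the inequalities are in hand, the three Lyapunov decay statements follow by one-line recursions.

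First, for \eqref{mmc2} I would take $\xi = 1_n$. Row-stochasticity of $P_\pi$ gives $P_\pi 1_n = 1_n$, so $A_\pi \xi = \gamma P_\pi 1_n = \gamma\, 1_n = \gamma \xi$, and the inequality holds with equality; positivity of $\xi$ is immediate. For \eqref{mmc3} I would take $\nu = \omega$. Stationarity $\omega^\tp P_\pi = \omega^\tp$ yields $\nu^\tp A_\pi = \gamma \omega^\tp P_\pi = \gamma \omega^\tp = \gamma \nu^\tp$, again with equality. Positivity of $\omega$ is a standard fact (we may assume the Markov chain induced by $\pi$ is irreducible; otherwise one restricts to the recurrent class).

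The main obstacle is \eqref{mmc4}. With $\xi = 1_n$, the diagonal matrix becomes $G = \operatorname{diag}(\omega(1),\dots,\omega(n)) =: D_\omega$, which is positive definite since $\omega > 0$. The inequality reduces to showing $P_\pi^\tp D_\omega P_\pi \preceq D_\omega$, i.e., $P_\pi$ is a nonexpansion with respect to the $\omega$-weighted $\ell_2$ norm. I would verify this by picking an arbitrary $x\in\R^n$ and applying Jensen's (or Cauchy--Schwarz) inequality row by row:
\begin{align*}
x^\tp P_\pi^\tp D_\omega P_\pi x &= \sum_i \omega(i)\Bigl(\sum_j P_\pi(i,j) x(j)\Bigr)^2 \\
&\le \sum_i \omega(i) \sum_j P_\pi(i,j) x(j)^2,
\end{align*}
and then swap the order of summation and use $\sum_i \omega(i) P_\pi(i,j) = \omega(j)$ to obtain $\sum_j \omega(j) x(j)^2 = x^\tp D_\omega x$. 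Multiplying by $\gamma^2$ gives \eqref{mmc4}.

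Finally, the Lyapunov bounds follow by iterating the one-step contractions these inequalities encode. For $V_1$, writing $|\zeta_{k+1}(i)| \le \gamma \sum_j P_\pi(i,j)|\zeta_k(j)| \le \gamma \max_j |\zeta_k(j)|$ yields $V_1(\zeta_{k+1})\le \gamma V_1(\zeta_k)$, hence $V_1(\zeta_k)\le \gamma^k V_1(\zeta_0)$ with $C_1 := V_1(\zeta_0)$. For $V_2$, the identity $\nu^\tp \zeta_{k+1} = \gamma \nu^\tp \zeta_k$ from \eqref{mmc3} gives $V_2(\zeta_{k+1}) = \gamma V_2(\zeta_k)$ exactly, so $C_2 := V_2(\zeta_0)$. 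For $V_3$, condition \eqref{mmc4} gives $V_3(\zeta_{k+1}) = \zeta_k^\tp A_\pi^\tp G A_\pi \zeta_k \le \gamma^2 V_3(\zeta_k)$, whence $V_3(\zeta_k)\le \gamma^{2k} V_3(\zeta_0)$ with $C_3 := V_3(\zeta_0)$. The only nontrivial step throughout is the Jensen argument for \eqref{mmc4}; everything else is structural.
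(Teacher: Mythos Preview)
Your proposal is correct and follows essentially the same route as the paper for \eqref{mmc2}--\eqref{mmc4}: the paper also uses $P_\pi 1_n = 1_n$ and $\omega^\tp P_\pi = \omega^\tp$ to get the first two conditions (with equality), and for \eqref{mmc4} it defers to Proposition~2 of Rantzer's positive-systems paper, whose content is precisely the Jensen/Cauchy--Schwarz argument you wrote out explicitly.

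The one genuine difference is in how $V_2$ is handled. You exploit that \eqref{mmc3} holds with \emph{equality} here, so $\nu^\tp \zeta_{k+1} = \gamma\,\nu^\tp \zeta_k$ and $V_2$ decays exactly by a factor $\gamma$ with $C_2 = V_2(\zeta_0)$. The paper instead treats $V_2$ as a copositive Lyapunov function in the general sense: it splits $\zeta_0 = \zeta_0^+ - \zeta_0^-$ with $\zeta_0^\pm \in \R_+^n$, propagates the two nonnegative trajectories separately (where the inequality $\nu^\tp A_\pi \le \gamma \nu^\tp$ can be applied entrywise), and bounds $V_2(\zeta_k) \le \gamma^k(V_2(\zeta_0^+)+V_2(\zeta_0^-))$. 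Your argument is shorter and gives a cleaner constant, but it relies on the equality specific to the stationary distribution; the paper's decomposition is the standard device that would still work if one only had the strict inequality $\nu^\tp A_\pi < \nu^\tp$ from Proposition~\ref{Propo_PE}, and this is the form that generalizes to the switched setting later in the paper.
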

\begin{proof}
Since $P_\pi$ is always a right stochastic matrix, we have $ P_\pi  1_n =1_n $ and $\omega^\tp P_\pi = \omega^\tp$. Therefore,  
\begin{align*}
&A_\pi 1_n -  \gamma 1_n = \gamma ( P_\pi  1_n  -   1_n) =  0 \\
&\omega^\tp A_\pi - \gamma \omega^\tp =  \gamma\left( \omega^\tp P_\pi -   \omega^\tp \right)= 0.
\end{align*}
Hence \eqref{mmc2} and \eqref{mmc3} hold. The third condition can be proved as discussed by Proposition 2 in \cite{Rantzer2011}. The rest of the results follow from standard arguments in positive system theory.
\end{proof}

Notice that the conditions \eqref{mmc2} \eqref{mmc3} are LPs, and \eqref{mmc4} can be solved as SDPs.
Theorem \ref{Thm: PE} provides three different types of Lyapunov functions for the positive system \eqref{eq:VC2}: 
\begin{enumerate}
\item {$\ell_\infty$-type}: $V_1(\zeta) = \max_i |\zeta(i)|$; 
\item {Linear-type copositive}: $V_2(\zeta) =|\nu^\tp \zeta|$; 
\item {Quadratic}: $V_3(\zeta) = \zeta^\tp G \zeta$. 
\end{enumerate}
It is trivial to show $V_1(\zeta_{k+1})\le \gamma V_1(\zeta_k)$ and $V_3(\zeta_{k+1})\le \gamma^2 V_3(\zeta_k)$. For $i=2$, 
the Lyapunov function is copositive and works slightly differently. Here we briefly explain how it works. 
 For any $\zeta_0\in \R^n$, $\exists \zeta_0^+, \zeta_0^-\in \R_+^n$ s.t. $\zeta_0=\zeta_0^+ -\zeta_0^-$. Let $\{\zeta_k^+\}$ and $\{\zeta_k^-\}$ be the state trajectories of \eqref{eq:VC2} initialized from $\zeta_0^+$ and $\zeta_0^-$, respectively. 
By linearity, we have $\zeta_k=\zeta_k^+ -\zeta_k^-$.
Based on the condition \eqref{mmc3}, we can show $V_2(\zeta_k)\le V_2(\zeta_k^+)+V_2(\zeta_k^-)\le \gamma^k (V_2(\zeta_0^+)+V_2(\zeta_0^-))$. This ensures the convergence of VC.

A key message from the above analysis is that the Lyapunov function construction for positive linear systems can be simpler than general LTI systems due to the use of LPs. 
Since the construction of the max-type (or $\ell_\infty$-type) Lyapunov function $V_1(\cdot)$ is independent of the underlying policy, we may construct an $\ell_\infty$-type common Lyapunov function for cases where the policy is changing over time.

\subsection{LPs and Common Lyapunov Functions for VI}
\label{sec:VIP}
Next we establish the connection between VI and switched positive affine  systems. This will lead to LP conditions for analyzing VI. The VI scheme $J_{k+1}=T(J_k)$ can be recast~as
\begin{align}
\label{eq:VI_switch}
J_{k+1}=\gamma P_{\sigma_k} J_k+R_{\sigma_k}
\end{align}
where $\sigma_k\in \{1,2,\ldots, l^n\}$. Recall $l$ and $n$ denote the size of action space and state space, respectively. When $\sigma_k=m$, we set $P_{\sigma_k}=P_m$ and $R_{\sigma_k}=R_m$. For all $m\in\{1,2,\cdots,l^n\}$,  $P_m$ is an $n\times n$ matrix whose $i$-th row (for all $i$) is a row vector in the form of $\bmat{P((i,a),1)& P((i,a),2) & \ldots & P((i,a), n)}$ with some $a\in \mathcal{A}$. Similarly, for all $m$, the vector $R_m$ is a vector whose $i$-th element (for all $i$) is equal to $R(i,a)$ for some $a\in \mathcal{A}$. The total number of the $(P_m, R_m)$ pairs is $l^n$, and we denote the set of all such pairs as $\mathbf{\Lambda}$. Therefore,
we can just view \eqref{eq:VI_switch} as a switched positive affine system, and  it is not that surprising that we can analyze VI via switched system theory.

For ease of exposition, we first address the case where  $R_m = 0$ for all $m$.
 In this case, we have $J^*=0$, and \eqref{eq:VI_switch}
 can be rewritten as a  switched positive  linear system:
 \begin{align}
\label{eq:VI_switch_R_0}
J_{k+1}=A_{m} J_k, \quad m \in \{1 ,2, \cdots, l^n \},
\end{align}
where $A_m = \gamma P_m \in \mathbb{R}^{n \times n}$. A well-known fact is that the system state of \eqref{eq:VI_switch_R_0} may diverge for some switching sequence even when $A_{m}$ is Schur stable for all $m$ \cite{DeCarlo2000}. 
The stability guarantees  for \eqref{eq:VI_switch_R_0} are typically obtained by extending the Lyapunov approach presented in Proposition~\ref{Propo_PE}. One way is to use the {common Lyapunov function (CLF)}.
\begin{proposition} \label{Pro_switch}
Suppose $A_m\ge 0$ for all $m$. Then each of the following conditions provides a sufficient condition for the stability of the switched positive system \eqref{eq:VI_switch_R_0}:
\begin{enumerate}
\item $\exists \xi \in \mathbb{R}^n$ s.t. $\xi > 0$ and $A_m\xi - \xi<0$ for all $A_m$. \label{d2} 
\item $\exists \nu \in \mathbb{R}^n$ s.t. $\nu > 0$ and $\nu^\tp A_m -\nu^\tp<0$ for all $A_m$. \label{d3}
\item $\exists$ a  matrix $G\succ 0$ s.t. $A_m^\tp G A_m-G\prec 0$ for all $A_m$.  \label{d4}
\end{enumerate}
\end{proposition}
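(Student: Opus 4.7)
The overall plan is to show that each of the three conditions gives a \emph{common Lyapunov function} (CLF) for the family $\{A_m\}$, i.e.\ a single function of $J_k$ that decreases at a uniform geometric rate $\rho\in(0,1)$ regardless of the switching sequence $\sigma_k$. Because the set $\{1,\dots,l^n\}$ is finite, each of the strict inequalities in the proposition actually yields a uniform decrease rate $\rho=\max_m\rho_m<1$, which is the only nontrivial ingredient beyond what was done for the LTI case in Proposition~\ref{Propo_PE} and Theorem~\ref{Thm: PE}.

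The natural candidates, mirroring \eqref{LF_PE}, are
\begin{align*}
V_1(J)=\max_i \frac{|J(i)|}{\xi(i)},\qquad V_2(J)=\nu^\tp|J|,\qquad V_3(J)=J^\tp G J,
\end{align*}
where $|J|$ denotes componentwise absolute value. For condition (1), I would observe that $|A_m J|\le A_m |J|$ componentwise (this is where $A_m\ge 0$ is used), so $|A_m J|\le V_1(J)\,A_m\xi \le \rho\, V_1(J)\,\xi$ and hence $V_1(A_m J)\le \rho V_1(J)$ for every $m$. For condition (2), by the same pointwise bound, $V_2(A_m J)=\nu^\tp|A_m J|\le \nu^\tp A_m|J|\le \rho\,\nu^\tp|J|=\rho V_2(J)$. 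For condition (3), the argument is the standard quadratic one: from $A_m^\tp G A_m\prec G$ on a finite index set, obtain $A_m^\tp G A_m\preceq \rho G$ uniformly, giving $V_3(A_m J)\le \rho V_3(J)$. In each case, iterating along any switching sequence produces $V_i(J_k)\le \rho^k V_i(J_0)$, which implies $J_k\to 0$ since all three $V_i$ are proper.

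The main subtlety, exactly as in the discussion following Theorem~\ref{Thm: PE}, is that $V_1$ and $V_2$ are built from the positive-system inequalities, yet $J_k$ itself is not required to be nonnegative. The clean way to handle this is the same linearity/decomposition trick used for VC: write $J_0=J_0^+-J_0^-$ with $J_0^{\pm}\ge 0$, propagate each piece through \eqref{eq:VI_switch_R_0} using the same switching sequence $\{\sigma_k\}$, and use $V_2(J_k)\le V_2(J_k^+)+V_2(J_k^-)$ together with the fact that $J_k^{\pm}$ remain nonnegative (so the unsigned versions of the inequalities apply). For $V_1$ the componentwise absolute value bound $|A_m J|\le A_m|J|$ already absorbs the sign issue directly. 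For $V_3$ no modification is needed.

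The hard part is really just being careful about where positivity is invoked: the inequalities in (1) and (2) are \emph{copositive-type} statements (they presume nonnegative test vectors), so one has to verify that the CLF constructions remain valid on the whole state space rather than just $\R_+^n$. Once this is settled, the switched stability proof is a short extension of Proposition~\ref{Propo_PE}, and I would finish by remarking that each condition is only sufficient in the switched setting (unlike the LTI case in Proposition~\ref{Propo_PE} where they were also necessary), which is the standard reason why the CLF approach may be conservative for switched positive systems.
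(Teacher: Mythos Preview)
Your proposal is correct and follows essentially the same common-Lyapunov-function approach as the paper: the paper's proof sketches only the quadratic case (your $V_3$), obtaining $A_m^\tp G A_m\preceq(1-\epsilon)G$ from finiteness of the index set exactly as you do, and then defers conditions (1)--(2) to references \cite{Fornasini2011,pastravanu2014max} with the remark that positivity of $A_m$ is required there. Your write-up actually supplies the details the paper omits for (1) and (2), and your use of $|A_mJ|\le A_m|J|$ to handle signs directly is cleaner than the $\pm$-decomposition the paper invokes earlier for $V_2$ in the VC setting.
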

\begin{proof}
The proof is standard. The third condition actually does not require $A_m$ to be positive. 
If $A_m^\tp G A_m-G\prec 0$, then $\exists$ $\epsilon>0$ such that $A_m^\tp G A_m-(1-\epsilon) G\preceq 0$. Hence we can define a Lyapunov function $V(J_k)=J_k^\tp G J_k$ satisfying $V(J_{k+1})\le (1-\epsilon) V(J_k)$. This ensures the stability of \eqref{eq:VI_switch_R_0}.
The first and second conditions do require $A_m$ to be positive, and can be proved similarly.  See \cite{Fornasini2011,pastravanu2014max} for details.
\end{proof}

Similar to Theorem \ref{Thm: PE}, the testing conditions in Proposition~\ref{Pro_switch} can be modified to analyze convergence rates of~\eqref{eq:VI_switch_R_0}. One will obtain similar rate bounds as presented in Theorem~\ref{Thm: PE} if any of the following is feasible:
\begin{align}
& \exists~  \xi \in \mathbb{R}^n~s.t. ~ \xi > 0~ \text{and}~ A_m\xi \leq \gamma \xi\,\,\forall~ m. \label{mc2}  \\
& \exists~  \nu \in \mathbb{R}^{ n} ~ s.t. ~  \nu > 0 ~ \text{and}~ \nu^\tp A_m \leq  \gamma \nu^\tp\,\,\forall~m. \label{mc3} \\
& \exists~  G \in \mathbb{R}^{n\times n} ~  s.t. ~  G\succ 0  ~ \text{and}~ A_m^\tp G A_m \preceq  \gamma^2 G \,\,\forall~ m. \label{mc4}
\end{align}
It is interesting to see that in general, the system \eqref{eq:VI_switch_R_0} does not have a common linear copositive Lyapunov function since the stationary distributions for different $P_m$ are typically not the same. It also seems difficult to construct a common solution $G$ for the SDP \eqref{mc4}. 
However, since all $P_m$ share the same right eigenvector $1_n$, we have $\gamma P_m 1_n = \gamma 1_n$ for all~$m$. Hence we can solve \eqref{mc2}  to obtain an $\ell_\infty$-type CLF:
\begin{align}\label{eq:VVI}
V(J_k) = \|J_k-J^*\|_\infty.
\end{align}
Condition \eqref{mc2} can be used to guarantee $V(J_k)\le \gamma^k V(J_0)$.


Now, we can extend the above analysis to the general case where $R_m \neq 0$. In this case, we will show that the iterations of VI can be upper and lower bounded by the trajectories of two stable positive linear systems. Hence positive system theory can still be applied. We need the following lemma.
\begin{lemma}
\label{lem1}
Consider the switched positive affine system~\eqref{eq:VI_switch} with a switching sequence $\{\sigma_k\}$ completely determined by the Bellman optimality operator\footnote{In other words, the trajectory of such a switched system now exactly matches the sequence generated by the VI method.}. Then the following inequality holds for all $k$
\vspace{-0.05in}
\begin{align}
\gamma P^*(J_k-J^*) \le J_{k+1}-J^*\le \gamma P_{\sigma_k} (J_k-J^*).
\end{align}
  \end{lemma}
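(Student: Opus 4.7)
The plan is to exploit the two defining properties of the Bellman optimality fixed point: there exists a greedy pair $(P^*, R^*)\in \mathbf{\Lambda}$ (corresponding to an optimal policy) such that
\[
J^* = \gamma P^* J^* + R^*, \qquad J^* \ge \gamma P_m J^* + R_m \text{ for all } (P_m,R_m)\in \mathbf{\Lambda},
\]
where the inequality is entrywise and holds with equality for $m=*$. Throughout, I will use that each $P_m$ is a (right) stochastic matrix and hence nonnegative, so that entrywise inequalities are preserved under left multiplication by $\gamma P_m$.

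For the upper bound, I would start from $J_{k+1} = \gamma P_{\sigma_k} J_k + R_{\sigma_k}$, which holds because $\sigma_k$ is precisely the greedy switching index. Subtracting $J^*$ and using the suboptimality inequality $\gamma P_{\sigma_k} J^* + R_{\sigma_k} \le J^*$ to substitute for $-J^*$ from above yields
\[
J_{k+1} - J^* \;\le\; \gamma P_{\sigma_k} J_k + R_{\sigma_k} - \gamma P_{\sigma_k} J^* - R_{\sigma_k} \;=\; \gamma P_{\sigma_k}(J_k - J^*),
\]
which is the right-hand inequality. For the lower bound, I would observe that the greedy rule guarantees $J_{k+1} = T(J_k) \ge \gamma P^* J_k + R^*$ entrywise, since the max over $\mathbf{\Lambda}$ dominates any particular choice. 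Subtracting $J^*$ and invoking the Bellman equation $J^* = \gamma P^* J^* + R^*$ to cancel $R^*$ gives
\[
J_{k+1} - J^* \;\ge\; \gamma P^* J_k + R^* - \gamma P^* J^* - R^* \;=\; \gamma P^*(J_k - J^*),
\]
which is the left-hand inequality.

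There is no serious obstacle here; the argument is essentially a one-line manipulation on each side. The only point requiring care is bookkeeping of the direction of entrywise inequalities, specifically noting that $R^*$ and $R_{\sigma_k}$ cancel on the two sides because we subtract the Bellman identity for $J^*$ against the matching greedy index ($*$ on the lower side, $\sigma_k$ on the upper side), and that nonnegativity of $P_m$ is what lets us preserve these componentwise bounds into the next iteration when the lemma is subsequently invoked.
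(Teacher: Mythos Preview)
Your proof is correct and follows essentially the same approach as the paper: the paper also introduces $(P^*,R^*)$ with $J^*=\gamma P^* J^*+R^*$, records the two entrywise inequalities $\gamma P_m J^*+R_m \le J^*$ and $\gamma P_m J_k+R_m \le J_{k+1}$ (for all $m$), and then specializes them to $m=\sigma_k$ and $m=*$ respectively to obtain the two bounds. Your write-up is in fact a bit more explicit than the paper's, which simply cites these two inequalities and says ``one can verify'' the conclusion.
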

  \vspace{0.1in}
 \begin{proof}
Since $J^*=T(J^*)$, there exists a pair $(P^*, R^*)\in \mathbf{\Lambda}$ such that $J^*= \gamma P^* J^*+R^*$. By the definition of the Bellman optimality operator, one can show that the following two inequalities holds for all $m$:
\begin{align}
\gamma P_m J^*+R_m &\le \gamma P^* J^*+R^*, \label{eq:con1}  \\
 \gamma P_m J_k+R_m &\le \gamma P_{\sigma_k} J_k+R_{\sigma_k}. \label{eq:con2}
\end{align}
Using \eqref{eq:con1}, \eqref{eq:con2}, and the fact that $J^*=\gamma P^* J^*+R^*$, one can verify
$\gamma P^* (J_k-J^*) \le J_{k+1} - J^* \le  \gamma P_{\sigma_k} (J_k-J^*)$.
This leads to the desired conclusion. 
\end{proof}  

Based on Lemma \ref{lem1}, we obtain the following main result.
 \begin{theorem}
Consider the switched positive affine system~\eqref{eq:VI_switch} with a switching sequence $\{\sigma_k\}$ completely determined by the Bellman optimality operator $T$. Suppose the sequence $\{J_k^u\}$ is generated by the system $J_{k+1}^u-J^*=\gamma P_{\sigma_k} (J_k^u-J^*)$ with the same switching sequence $\{\sigma_k\}$. Let the sequence $\{J_k^{o}\}$ be generated by the system $J_{k+1}^o-J^*=\gamma P^*(J_k^o-J^*)$. Suppose $J_0=J_0^u=J_0^o$. Then we have
 \begin{align}\label{eq:keybound1}
J_k^o-J^*\le J_k-J^*\le J_k^u-J^*,\,\forall k
\end{align}
 \end{theorem}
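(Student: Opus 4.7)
The plan is a straightforward induction on $k$, leaning on Lemma~\ref{lem1} for the one-step sandwich bound and on the nonnegativity of the transition matrices $P_m$ to propagate inequalities through the recursion.

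For the base case $k=0$, the hypothesis $J_0=J_0^u=J_0^o$ makes all three quantities equal, so the desired inequality \eqref{eq:keybound1} holds trivially.

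For the inductive step, I assume $J_k^o - J^* \le J_k - J^* \le J_k^u - J^*$ and aim to push this to index $k+1$. I would first invoke Lemma~\ref{lem1}, which states $\gamma P^*(J_k - J^*) \le J_{k+1} - J^* \le \gamma P_{\sigma_k}(J_k - J^*)$ whenever $\{\sigma_k\}$ is the switching sequence produced by the Bellman operator. For the upper bound, I chain
\begin{align*}
J_{k+1}-J^* &\le \gamma P_{\sigma_k}(J_k-J^*) \\
&\le \gamma P_{\sigma_k}(J_k^u-J^*) = J_{k+1}^u - J^*,
\end{align*}
where the second inequality uses the induction hypothesis $J_k-J^* \le J_k^u-J^*$ together with the fact that $P_{\sigma_k}\ge 0$ preserves componentwise inequalities. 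The lower bound is symmetric: from Lemma~\ref{lem1} and the induction hypothesis $J_k^o-J^* \le J_k-J^*$, using $P^*\ge 0$, I obtain
\begin{align*}
J_{k+1} - J^* &\ge \gamma P^*(J_k - J^*) \\
&\ge \gamma P^*(J_k^o - J^*) = J_{k+1}^o - J^*.
\end{align*}
Combining the two bounds closes the induction.

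There is no real obstacle here, since all the technical content has already been absorbed into Lemma~\ref{lem1}; the only point requiring a sentence of justification is the monotonicity step $x\le y \Rightarrow Px \le Py$ for any entrywise nonnegative matrix $P$, which is immediate from $P\ge 0$. The proof is essentially a bookkeeping exercise that stitches together the sandwich inequality of Lemma~\ref{lem1} (which controls one step of the true VI iterate in terms of the two switched linear dynamics) with the order-preserving action of the nonnegative matrices $P^*$ and $P_{\sigma_k}$ (which lets the comparison at time $k$ be carried forward to time $k+1$).
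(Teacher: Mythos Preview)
Your proposal is correct and matches the paper's proof essentially line for line: both use induction on $k$, invoke Lemma~\ref{lem1} for the one-step sandwich, and then push the inequality forward using the entrywise nonnegativity of $P_{\sigma_k}$ and $P^*$. The only cosmetic difference is that the paper phrases the monotonicity step as ``$P_{\sigma_k}$ is right stochastic'' rather than ``$P_{\sigma_k}\ge 0$,'' but the content is identical.
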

 \vspace{0.05in}
\begin{proof}
This theorem can be proved using induction. When $k=0$, it is straightforward to verify that \eqref{eq:keybound1} holds as a consequence of Lemma \ref{lem1}. Suppose \eqref{eq:keybound1} holds for $k=t$. For $k=t+1$, we can apply Lemma \ref{lem1} to show 
\begin{align*}
J_{t+1}-J^*&\le \gamma P_{\sigma_k} (J_t-J^*) \le \gamma P_{\sigma_k} (J_t^u-J^*)=J_{t+1}^u-J^*
\end{align*}
where the second step follows from the fact that $P_{\sigma_k}$ is right stochastic.
 Based on Lemma \ref{lem1},  we can use a similar argument to show $J_{t+1}-J^*\ge J_t^o-J^*$. Hence \eqref{eq:keybound1} holds for $k=t+1$. This completes the proof.
\end{proof}
 
 Therefore, we can directly apply the LP condition \eqref{mc2} to construct an $\ell_\infty$-type CLF for VI and prove the rate bound
$\|J_k-J^*\|_\infty\le \max\{\|J_k^u-J^*\|_\infty, \|J_k^o-J^*\|_\infty\} \le \gamma^k \|J_0-J^*\|_\infty$.
This demonstrates how to apply the 
simple LP condition \eqref{mc2} to analyze VI.


\subsection{SDPs for TD(0) with Linear Function Approximation}
\label{sec: TD}
In this section, we provide SDP-based finite time analysis for TD(0) with linear function approximation. Since TD(0)  can be viewed as a MJLS, the SDP-based stability conditions for MJLS can be directly applied. 
Recall that TD(0) (with linear function approximation) follows the update rule $\theta_{k+1} = \theta_k - \alpha \phi(s_k)\left( (\phi(s_k) - \gamma \phi(s_{k+1}))^\tp \theta_k - R_{\pi}(s_k) \right)$, where $\phi$ is the feature vector, and $\theta$ is the weight to be estimated. We can augment $\bmat{s_{k+1}^\tp \,\, s_{k}^\tp}^\tp\in \mathcal{S}\oplus \mathcal{S}$ as a new vector~$z_k$. Obviously, there is a one-to-one mapping from $\mathcal{S}\oplus \mathcal{S}$ to the set $\mathcal{N}=\{1,2,\cdots, n^2\}$. Without loss of generality, $\{z_k\}$ can be set up as a Markov chain sampled from $\mathcal{N}$.
 Suppose $\theta_\pi$  is the solution to the  projected
Bellman equation for the fixed policy $\pi$.  Due to the one-to-one correspondence between $\bmat{s_{k+1}^\tp \,\, s_{k}^\tp}^\tp$ and $z_k$, the iteration of TD(0) can be recast~as
\begin{equation} \label{TD_line}
\theta_{k+1} - \theta_\pi = \theta_k - \theta_\pi + \alpha \left( A_{z_k} (\theta_k - \theta_\pi) + b_{z_k} \right),
\end{equation}
where $A_{z_k}=\phi(s_k) ( \gamma \phi(s_{k+1}) -  \phi(s_k) )^\tp$ and  $b_{z_k} =  \phi(s_k)\left( R_{\pi}(s_k) + (\phi(s_k) - \gamma \phi(s_{k+1}))^\tp \theta^\pi\right)$.  When $z_k=i\in \mathcal{N}$, we have $A_{z_k}=A_i$ and $b_{z_k}=b_i$.
We denote $\zeta_k=\theta_k-\theta_\pi$. Then \eqref{TD_line} can be rewritten as the following MJLS:
\begin{align}\label{MJLS_gen}
\zeta_{k+1}=H_{z_k} \zeta_k+\alpha b_{z_k} u_k.
\end{align}
where $H_{z_k}=I+\alpha A_{z_k}$,  and $u_k=1$ $\forall k$.
 When $z_k=i\in \mathcal{N}$, we have $H_{z_k} = H_i $. 
Denote $p_{ij}=\mathbf{P}(z_{k+1}=j\vert z_k=i)$, and $N=n^2$. Then the following mean square stability condition  \cite{costa2006, costa1993stability, el1996robust} can be directly applied to analyze \eqref{MJLS_gen}.
\begin{proposition} \label{MSS}
The MJLS \eqref{MJLS_gen} is mean square stable (MSS) if and only if there exist matrices $G_i \succ 0$ for $i = 1, \cdots, N$ such that the following SDP is feasible:
\begin{equation} 
G_i - H_i^\tp \left(\sum_{j=1}^N p_{ij}G_j\right)H_i \succ 0, \text{for}~ i = 1, \cdots, N. \label{MSSc1}
\end{equation}
\end{proposition}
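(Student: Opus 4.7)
The plan is to prove both implications via a mode-dependent quadratic Lyapunov function $V(\zeta,i) = \zeta^\tp G_i\zeta$. Since the additive term $\alpha b_{z_k}u_k$ in \eqref{MJLS_gen} takes values in a finite set, MSS of \eqref{MJLS_gen} is governed by the homogeneous dynamics $\zeta_{k+1} = H_{z_k}\zeta_k$, so the Lyapunov argument is naturally carried out on the homogeneous system and the forcing is handled by standard boundedness.

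\textbf{Sufficiency.} Suppose $\{G_i \succ 0\}_{i=1}^N$ satisfies \eqref{MSSc1}. Using the Markov property of $\{z_k\}$, the one-step conditional expectation is
$$\mathbb{E}\bigl[V(\zeta_{k+1},z_{k+1}) \mid \zeta_k,\, z_k = i\bigr] = \zeta_k^\tp H_i^\tp\Bigl(\sum_{j=1}^{N} p_{ij} G_j\Bigr) H_i\,\zeta_k,$$
which by \eqref{MSSc1} is strictly less than $V(\zeta_k,i)$. Since $\mathcal{N}$ is finite, there exist $0<c<1$ and $0<\kappa_1\le\kappa_2$ such that $\kappa_1\|\zeta\|^2\le V(\zeta,i)\le \kappa_2\|\zeta\|^2$ for all $i$, and $\mathbb{E}[V(\zeta_{k+1},z_{k+1})]\le c\,\mathbb{E}[V(\zeta_k,z_k)]$. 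Iterating and applying the sandwich bound yields $\mathbb{E}[\|\zeta_k\|^2] \to 0$, which is exactly MSS.

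\textbf{Necessity.} Assume the homogeneous system is MSS. Let $\Phi_k$ denote the (random) state-transition matrix from step $0$ to step $k$ along the sampled Markov path, and fix any $Q \succ 0$. Define
$$G_i := \sum_{k=0}^{\infty}\mathbb{E}\bigl[\Phi_k^\tp Q\,\Phi_k \,\big|\, z_0 = i\bigr].$$
MSS, reinterpreted as Schur stability of the associated second-moment operator on the space of $N$-tuples of symmetric matrices, guarantees that each series converges, and $Q\succ 0$ forces each $G_i\succ 0$. Conditioning on $z_1$ and using the Markov property produces the coupled Lyapunov equation
$$G_i = Q + H_i^\tp\Bigl(\sum_{j=1}^N p_{ij} G_j\Bigr) H_i,$$
from which the strict inequality \eqref{MSSc1} is immediate.

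The main obstacle is the necessity direction: precisely identifying MSS with exponential decay of the second-moment operator on $N$-tuples of symmetric matrices, and using this identification to certify both convergence and uniform positive-definiteness of the Gramian-style series defining $G_i$. Once that operator-theoretic equivalence is in place, the coupled Lyapunov equation falls out of a clean one-step decomposition, and sufficiency is routine. I would also mention that, had one considered the full driven recursion \eqref{MJLS_gen}, a completion-of-squares step absorbs the cross term involving $\alpha b_{z_k}$ into a weakened contraction plus a bounded additive term, delivering mean-square boundedness without altering the SDP characterization.
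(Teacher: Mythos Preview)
Your argument is correct and is precisely the standard mode-dependent Lyapunov construction used in the MJLS literature; the paper itself does not give a proof but simply points to \cite{costa2006,costa1993stability}, and what you have written is essentially the argument found there (including the Gramian series for necessity and the one-step conditional-expectation contraction for sufficiency). Your remark that the bounded affine forcing $\alpha b_{z_k}u_k$ does not affect the SDP characterization is also in line with how the paper uses the proposition, invoking the wide-sense stationarity results of \cite{costa2006} once MSS of the homogeneous part is established.
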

\begin{proof}
This stability condition is well known. For more details, see  discussions in \cite{costa2006} or \cite{costa1993stability}.
\end{proof}

There are multiple ways to prove the mean square stability of \eqref{MJLS_gen} from the SDP condition \eqref{MSSc1}.
One way is to construct the following quadratic Lyapunov function from  $\{G_i\}$:
\begin{align}\label{eq:MJLSLya}
V(\zeta_k)=\mathbb{E} \left[\zeta_k^\tp G_{z_k} \zeta_k\right].
\end{align}
Once the MJLS \eqref{MJLS_gen} is shown to be MSS, Theorem 3.33 in \cite{costa2006} can be applied to show that \eqref{MJLS_gen} is also asymptotically wide sense stationary, and then the mean square TD error can be exactly calculated via Proposition~3.35 in \cite{costa2006}. As a matter of fact, the convergence bounds in Corollary 2 of \cite{hu2019characterizing} can be directly applied  whenever \eqref{MJLS_gen} is MSS. 
Therefore, the finite time analysis of TD(0) boils down to checking the mean square stability of the MJLS \eqref{MJLS_gen}. Next, we show how to construct solutions for the SDP condition \eqref{MSSc1} under the following standard assumption. 
\begin{assumption}\label{assumption1}
Suppose $\{z_k\}$ is irreducible and aperiodic.  Denote $p_i^\infty=\lim_{k\rightarrow \infty}\mathbf{P}(z_k=i)$ and $\bar{A}=\sum_{i = 1}^N {p_i}^\infty A_i$. We assume $\bar{A}$ is Hurwitz, and $\sum_{i=1}^N p_i^\infty b_i=0$. 
\end{assumption}

Under Assumption \ref{assumption1},
let $\bar{G}$ be the solution to the Lyapunov equation $\bar{A}^\tp \bar{G}+\bar{G} \bar{A}=-I$. 
We also denote $X_i=A_i^\tp \bar{G}+\bar{G} A_i+I/(p_i^\infty N)$. 
Now we can state the following result.

\begin{lemma}
Supposed Assumption \ref{assumption1} is given.
For sufficiently small $\alpha$, we can solve the SDP \eqref{MSSc1} by choosing $G_i=\bar{G}+\alpha\tilde{G}_i$, where $\tilde{G}_N=0$ and $\tilde{G}_i$ (for $i=1, \cdots, N-1$) is solved from the following linear equation:
\begin{align}\label{eq:Geq}
\tilde{G}_i -\sum_{j=1}^{N-1} p_{ij} \tilde{G}_j=X_i, \,\,\,\mbox{for}\,\, \,i=1,\cdots, N-1.
\end{align}
\end{lemma}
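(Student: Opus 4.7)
The plan is to verify the ansatz $G_i=\bar{G}+\alpha\tilde{G}_i$ satisfies \eqref{MSSc1} by a first-order perturbation in $\alpha$. The key observation is that at $\alpha=0$ every $H_i$ equals the identity and every $G_i$ equals $\bar{G}$, so the SDP residual $G_i-H_i^\tp(\sum_j p_{ij}G_j)H_i$ vanishes identically; strict positive definiteness must therefore be recovered from the $O(\alpha)$ correction, with the $O(\alpha^2)$ error being dominated for small $\alpha$.

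First I would substitute $H_i=I+\alpha A_i$ into the residual and expand in powers of $\alpha$. The constant term cancels, the linear-in-$\alpha$ coefficient equals
\begin{align*}
L_i=\tilde{G}_i-\sum_{j=1}^{N}p_{ij}\tilde{G}_j-\bigl(A_i^\tp\bar{G}+\bar{G}A_i\bigr),
\end{align*}
and the remainder is a uniformly bounded $O(\alpha^2)$ matrix over the finite index set $\{1,\ldots,N\}$. Plugging in the definition $X_i=A_i^\tp\bar{G}+\bar{G}A_i+I/(p_i^\infty N)$ rewrites $L_i=\tilde{G}_i-\sum_j p_{ij}\tilde{G}_j-X_i+I/(p_i^\infty N)$. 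Using $\tilde{G}_N=0$, for $i=1,\ldots,N-1$ the linear system \eqref{eq:Geq} forces the first three terms to cancel, leaving $L_i=I/(p_i^\infty N)\succ 0$.

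The delicate case is $i=N$, where $\tilde{G}_N=0$ is prescribed rather than chosen, so I need $-\sum_{j=1}^{N-1}p_{Nj}\tilde{G}_j-X_N=0$. For this I would exploit the consistency of the full $N$-equation system $\tilde{G}_i-\sum_j p_{ij}\tilde{G}_j=X_i$: weighting by $p_i^\infty$ and summing over $i$, the left-hand side vanishes by the stationarity identity $\sum_i p_i^\infty p_{ij}=p_j^\infty$, while the right-hand side equals $\bar{A}^\tp\bar{G}+\bar{G}\bar{A}+I=0$ by the Lyapunov equation defining $\bar{G}$ and Assumption~\ref{assumption1}. Hence the $N$ equations are linearly dependent, with the redundant one being precisely the index-$N$ equation, so any solution of \eqref{eq:Geq} automatically satisfies the $i=N$ relation. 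Existence and uniqueness of that solution follow from invertibility of $I-Q$, where $Q$ is the principal $(N-1)\times(N-1)$ submatrix of the transition matrix; irreducibility of $\{z_k\}$ forces $\rho(Q)<1$.

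Finally I would take $\alpha$ small enough that (a) $\bar{G}+\alpha\tilde{G}_i\succ 0$ for every $i$, which is possible because $\bar{G}\succ 0$ by $\bar{A}$ Hurwitz, and (b) the positive definite leading term $\alpha I/(p_i^\infty N)$ dominates the uniform $O(\alpha^2)$ remainder for every $i$. The main obstacle is articulating the $i=N$ case cleanly: the choice $\tilde{G}_N=0$ looks ad hoc, and its legitimacy comes entirely from the stationarity-plus-Lyapunov consistency that renders the $N$th equation redundant with the other $N-1$, which is precisely where Assumption~\ref{assumption1} enters.
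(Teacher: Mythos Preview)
Your proposal is correct and follows essentially the same route as the paper: expand the SDP residual in powers of $\alpha$, use \eqref{eq:Geq} to reduce the first-order coefficient to $I/(p_i^\infty N)$ for $i<N$, and handle $i=N$ through the Lyapunov equation combined with stationarity of $\{p_i^\infty\}$. Your weighted-sum redundancy argument for $i=N$ is a slightly cleaner packaging of the paper's direct substitution of $A_N^\tp\bar{G}+\bar{G}A_N=\frac{1}{p_N^\infty}\bigl(-I-\sum_{i<N}p_i^\infty(A_i^\tp\bar{G}+\bar{G}A_i)\bigr)$, but the underlying computation is the same.
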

\begin{proof}
First, notice that \eqref{eq:Geq} does have a  unique solution. To see this, let $\hat{P}\in \R^{(N-1)\times (N-1)}$ be a substochastic matrix whose $(i,j)$-th entry is equal to $p_{ij}$.  Then $\hat{P}$ is a submatrix of the transition matrix of $\{z_k\}$, and has a spectral radius which is smaller than $1$. Hence $(I_{N-1}-\hat{P})$ is invertible, and
\eqref{eq:Geq} admits a unique well-defined solution.  Since $X_i$ is symmetric for all $i$, the resultant matrices $\{\tilde{G}_i\}$ are also symmetric. Now we briefly explain our choices of $G_i$. If we substitute $G_i=\bar{G}+\alpha\tilde{G}_i$ into \eqref{MSSc1}, we get
\begin{align}\label{MSSc2}
\tilde{G}_i-\sum_{j=1}^N p_{ij}\tilde{G}_j-(A_i^\tp \bar{G}+\bar{G} A_i)+O(\alpha)\succ 0, \,\forall i
\end{align}
For $i=1,\cdots, N-1$, we can substitute  \eqref{eq:Geq} and $\tilde{G}_N=0$ into \eqref{MSSc2} to simplify it as $I/(p_i^\infty N)+O(\alpha)\succ 0$, which clearly holds for sufficiently small $\alpha$. For $i=N$, we can use the fact $\bar{A}^\tp \bar{G}+\bar{G} \bar{A}=-I$ to show 
\begin{align*}
A_N^\tp \bar{G}+\bar{G} A_N=\frac{1}{p_N^\infty}\left(-I-\sum_{i=1}^{N-1}p_i^\infty(A_i^\tp \bar{G}+\bar{G}A_i)\right)
\end{align*}
We have $A_i^\tp \bar{G}+\bar{G} A_i=\tilde{G}_i-\sum_{j=1}^{N-1}p_{ij} \tilde{G}_j-I/(p_i^\infty N)$ for $i< N$ (see \eqref{eq:Geq}). Substituting these  into \eqref{MSSc2} for $i=N$ leads to $I/(p_N^\infty N)+O(\alpha)\succ 0$,
which holds for small $\alpha$.
\end{proof}

Next, we provide an explicit upper bound on $\alpha$. To make sure that $\{\bar{G}+\alpha\tilde{G}_i\}$  solves the SDP condition \eqref{MSSc1}, we need
\begin{align}\label{eq:alpha}
\bar{G}+\alpha \tilde{G}_i \succ 0, \,\,\,I/(p_i^\infty N)+\alpha M_i+\alpha^2 \tilde{M}_i\succ 0, \,\,\, \forall i
\end{align}
where  $\tilde{M}_i=-A_i^\tp (\sum_{j=1}^N p_{ij} \tilde{G}_j) A_i$, and 
 $M_i=-A_i^\tp \bar{G} A_i-A_i^\tp (\sum_{j=1}^N p_{ij} \tilde{G}_j)-(\sum_{j=1}^N p_{ij} \tilde{G}_j) A_i$.
We know $\bar{G}\succ 0$ and $I/(p_i^\infty N)\succ 0$. Notice $\{\tilde{G}_i\}$, $\{M_i\}$, and $\{\tilde{M}_i\}$ are symmetric matrices which are completely determined by  $\{A_i\}$ and $p_{ij}$. 
Hence the SDP condition \eqref{MSSc1} is feasible with $G_i=\bar{G}+\alpha \tilde{G}_i$ if for all $i\in \mathcal{N}$, $\alpha$ satisfies $\lambda_{\min}(\bar{G})+\alpha \lambda_{\min}(\tilde{G}_i)>0$ and
\begin{align}\label{eq:Mineq}
1/(p_i^\infty N)+\alpha\lambda_{\min}(M_i)+ \alpha^2\lambda_{\min}(\tilde{M}_i)>0.
\end{align}
where $\lambda_{\min}$ denotes the smallest eigenvalue.
Let $\mathbf{1}_{\mathcal{D}}$ denote the indicator function for any set $\mathcal{D}$.
We have $\lambda_{\min}(\bar{G})+\alpha \lambda_{\min}(\tilde{G}_i)>0$ for any $0<\alpha<\frac{\lambda_{\min}(\bar{G})}{|\lambda_{\min}(\tilde{G}_i)|(1-\textbf{1}_{\tilde{G}_i\succeq 0})}$. When $\tilde{G}_i\succeq 0$, this bound becomes $+\infty$.
It is also straightforward to verify that \eqref{eq:Mineq} is true for any $0<\alpha<\bar{\alpha}_i$, 
where $\bar{\alpha}_i$ is defined as $\bar{\alpha}_i=\frac{1}{p_i^\infty N |\lambda_{\min}(M_i)|(1-\textbf{1}_{M_i\succeq 0})}$ if $\tilde{M}_i\succeq 0$, and $\bar{\alpha}_i= \frac{-\lambda_{\min}(M_i)-\sqrt{\lambda_{\min}^2(M_i)-4\lambda_{\min}(\tilde{M}_i)/(p_i^\infty N)}}{2\lambda_{\min}(\tilde{M}_i)}$ otherwise.
This leads to the following result.

\begin{theorem} \label{theorem:TD0withLA}
Given Assumption \ref{assumption1},  the TD(0) method \eqref{TD_line}
with step size
 $0<\alpha<\min_{i\in \mathcal{N}}\left\{\bar{\alpha}_i, \frac{\lambda_{\min}(\bar{G})}{|\lambda_{\min}(\tilde{G}_i)|(1-\textbf{1}_{\tilde{G}_i\succeq 0})}\right\}$
 is MSS, and the mean square estimation error $\mathbb{E}\norm{\theta_k-\theta_\pi}^2$ converges exponentially to its stationary value. 
\end{theorem}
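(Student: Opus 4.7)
The plan is to assemble the theorem from the construction in the preceding Lemma together with the explicit step-size analysis already carried out in the paragraph just before the statement. The key observation is that, by the Lemma, the choice $G_i = \bar{G} + \alpha \tilde{G}_i$ with $\tilde{G}_N = 0$ and $\tilde{G}_1, \ldots, \tilde{G}_{N-1}$ determined by the linear system \eqref{eq:Geq} reduces the SDP \eqref{MSSc1} to the two matrix inequalities in \eqref{eq:alpha}. So the proof need only argue that the explicit $\alpha$-bound stated in the theorem is exactly what is required to make both of those inequalities hold uniformly in $i \in \mathcal{N}$.

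First, I would handle the positive-definiteness requirement $\bar{G} + \alpha \tilde{G}_i \succ 0$. Since $\bar{G} \succ 0$, a sufficient (and, up to the sign of $\lambda_{\min}(\tilde{G}_i)$, essentially necessary) condition is $\lambda_{\min}(\bar{G}) + \alpha \lambda_{\min}(\tilde{G}_i) > 0$. If $\tilde{G}_i \succeq 0$, the corresponding bound is vacuous (interpreted as $+\infty$ via the indicator $\mathbf{1}_{\tilde{G}_i \succeq 0}$); otherwise $\alpha < \lambda_{\min}(\bar{G}) / |\lambda_{\min}(\tilde{G}_i)|$ suffices. Taking the minimum over $i$ yields one of the two terms in the step-size bound.

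Next, for \eqref{eq:Mineq} I would view $f_i(\alpha) := 1/(p_i^\infty N) + \alpha \lambda_{\min}(M_i) + \alpha^2 \lambda_{\min}(\tilde{M}_i)$ as a quadratic in $\alpha$ with positive value at $\alpha = 0$. If $\tilde{M}_i \succeq 0$, the quadratic term is non-negative and $f_i$ is positive as long as the linear term does not drag it down, which holds for $\alpha < 1/(p_i^\infty N |\lambda_{\min}(M_i)|)$ when $M_i \not\succeq 0$ and unconditionally otherwise; if $\tilde{M}_i \not\succeq 0$, $f_i$ is a downward concave parabola and positivity on $(0, \alpha)$ is guaranteed up to the smaller positive root, which is precisely the $\bar{\alpha}_i$ formula written in the paragraph preceding the theorem. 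Taking the minimum of $\bar{\alpha}_i$ over $i$ gives the second term in the bound.

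With both conditions in \eqref{eq:alpha} verified under the stated step-size restriction, Proposition \ref{MSS} immediately yields that the MJLS \eqref{MJLS_gen} is MSS, so $\mathbb{E}\|\zeta_k\|^2 = \mathbb{E}\|\theta_k - \theta_\pi\|^2$ is well-behaved; exponential convergence of the mean square estimation error to its stationary value then follows by invoking Theorem 3.33 and Proposition 3.35 in \cite{costa2006} (equivalently, Corollary 2 of \cite{hu2019characterizing}), as noted in the discussion preceding Assumption \ref{assumption1}. The main obstacle is mainly bookkeeping: carefully handling the four sign cases for $(\tilde{G}_i, M_i, \tilde{M}_i) \succeq 0$ or not, so that the indicator-style bound in the theorem statement genuinely covers all of them with a single unified expression; once those cases are laid out, each individual bound is a one-line calculation.
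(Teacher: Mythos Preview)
Your proposal is correct and follows essentially the same route as the paper: the paper's own proof simply refers back to ``the above discussion'' (the paragraph preceding the theorem) to conclude that the stated step-size bound enforces \eqref{eq:alpha}, then invokes Proposition~\ref{MSS} for MSS and Proposition~3.35 of \cite{costa2006} / Corollary~2 of \cite{hu2019characterizing} for the exponential convergence of $\mathbb{E}\|\theta_k-\theta_\pi\|^2$. Your write-up is a faithful expansion of that discussion, with the minor caveat that in the case $\tilde{M}_i \not\succeq 0$ there is exactly one positive root (not two), so ``smaller positive root'' should just read ``positive root.''
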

\begin{proof} 
From the above discussion, our stepsize bound can guarantee \eqref{eq:alpha}, and hence \eqref{TD_line} is MSS. Then the convergence behavior of $\mathbb{E}\norm{\theta_k-\theta_\pi}^2$ can be shown using Proposition 3.35 of \cite{costa2006} or Corollary 2 of \cite{hu2019characterizing}.
\end{proof}

With the MSS property, we can directly apply Corollary~2 in \cite{hu2019characterizing} to obtain explicit formulas for the convergence rate and the steady state error. We skip those formulas. 
Clearly, our result is closely related to \cite{hu2019characterizing} which also analyzes TD learning using MJLS theory. A key difference is that the analysis in \cite{hu2019characterizing} boils down to an LTI system formulation without exploiting the SDP ~\eqref{MSSc1}.
Our SDP approach brings a new benefit in providing an explicit stepsize bound guaranteeing MSS,
as specified by Theorem \ref{theorem:TD0withLA}.  In contrast, the analysis in \cite{hu2019characterizing} relies on advanced eigenvalue perturbation theory and only shows that TD(0) is MSS for sufficiently small $\alpha$ without providing such explicit stepsize bounds.

\section{CONCLUSION and Future Work}
\label{sec:con}
In this paper, we show that existing convex programs in control theory can be directly used to analyze value-based methods such as VC, VI, and TD(0) with linear function approximation. It is possible that these convex programs can be extended to address the impacts of computation error and delay. This will be investigated in the future.

\section*{ACKNOWLEDGMENT}
This work is generously supported by the NSF award 
CAREER-2048168 and the 2020 Amazon research award.



\bibliographystyle{IEEEtran}
\bibliography{IEEEabrv,main}
\end{document}